\newtheorem{dn}{Definition}[section]
\newtheorem{dl}{Theorem}[section]
\newtheorem{md}{Proposition}[section]
\newtheorem{bd}{Lemma}[section]
\newtheorem{hq}{Corollary}[section]
\newtheorem{nx}{Remark}[section]
\newtheorem{vd}{Example}[section]
\newcommand{\R}{\mathbb{R}}
\newcommand{\Z}{\mathbb{Z}}
\newcommand{\ity}{\infty}
\newcommand{\bbd}{\begin{bd}}
\newcommand{\ebd}{\end{bd}}
\newcommand{\bdn}{\begin{dn}}
\newcommand{\edn}{\end{dn}}
\newcommand{\bhq}{\begin{hq}}
\newcommand{\ehq}{\end{hq}}
\newcommand{\bdl}{\begin{dl}}
\newcommand{\edl}{\end{dl}}
\newcommand{\bnx}{\begin{nx}}
\newcommand{\enx}{\end{nx}}
\newcommand{\bmd}{\begin{md}}
\newcommand{\emd}{\end{md}}
\newcommand{\bvd}{\begin{vd}}
\newcommand{\evd}{\end{vd}}
\title[Critical exponent for structurally damped waves of derivative type]{Critical exponent for semi-linear structurally damped wave equation of derivative type}
\author{Tuan Anh Dao}
\address{Tuan Anh Dao \hfill\break
$\quad$ School of Applied Mathematics and Informatics, Hanoi University of Science and Technology, No.1 Dai Co Viet road, Hanoi, Vietnam \hfill\break
Faculty for Mathematics and Computer Science, TU Bergakademie Freiberg, Pr\"{u}ferstr. 9, 09596, Freiberg, Germany}
\email{anh.daotuan@hust.edu.vn}
\author{Ahmad Z. Fino}
\address{Ahmad Z. Fino \hfill\break
Department of Mathematics, Faculty of Sciences, Lebanese University, P.O. Box 826, Tripoli, Lebanon}
\email{ahmad.fino01@gmail.com; afino@ul.edu.lb}
\begin{document}
\subjclass[2010]{35B44, 35L76, 35L71, 35A01}
\keywords{Structural damping, Derivative type, Fractional Laplacian, Critical exponent}
\date{March 02, 2020}

\begin{abstract}
Main purpose of this paper is to study the following semi-linear structurally damped wave equation with nonlinearity of derivative type:
$$u_{tt}- \Delta  u+ \mu(-\Delta)^{\sigma/2} u_t= |u_t|^p,\quad u(0,x)= u_0(x),\quad u_t(0,x)=u_1(x),$$
with $\mu>0$, $n\geq1$, $\sigma \in (0,2]$ and $p>1$. In particular, we are going to prove the non-existence of global weak solutions by using a new test function and suitable sign assumptions on the initial data in both the subcritical case and the critical case.
\end{abstract}

\maketitle

\section{Introduction} \label{Sec.Intro}
This paper is concerned with the Cauchy problem for semi-linear structurally damped wave equation with the power nonlinearity of derivative type (powers of the first order time-derivatives of solutions as nonlinear terms) as follows:
\begin{equation} \label{equation1}
\begin{cases}
u_{tt}- \Delta  u+ \mu(-\Delta)^{\sigma/2} u_t= |u_t|^p, &\quad x\in \R^n,\, t> 0, \\
u(0,x)= u_0(x),\quad u_t(0,x)=u_1(x), &\quad x\in \R^n,
\end{cases}
\end{equation}
where $\mu>0$, $\sigma \in (0,2]$, $n\geq1$ and $p>1$. Here $(-\Delta)^{\sigma/2}$ is the fractional Laplacian defined as in Definition \ref{Def.FracLaplace} below when $\sigma\in(0,2)$, and when $\sigma=2$ it is the classical Laplacian.

Our main goal is to investigate the so-called critical exponent for \eqref{equation1}. By critical exponent $p_c=p_c(n,\sigma)$ we mean that global (in time) solutions cannot exist (it sometimes called blow-up is some cases), under suitable sign assumption on the initial data, in the critical and subcritical cases $p\leq p_c$, whereas small data global (in time) solutions exist in the supercritical case $p>p_c$. 

Regarding the structurally damped wave equation \eqref{equation1} with the power nonlinearity $|u|^p$, the critical exponent has been investigated by D'Abbicco and Reissig \cite{DabbiccoReissig}, where they proposed to distinguish between ``parabolic like models" in the case $\sigma \in (0,1]$, the so-called effective damping, and ``hyperbolic like models"in the remaining case $\sigma \in (1,2]$, the so-called non-effective damping according to expected decay estimates (see more \cite{DabbiccoEbert2016}). In the former case, they proved the existence of global (in time) solutions when
$$p>p_0(n,\sigma):=1+ \frac{2}{(n-\sigma)_+}$$
for the small initial data and low space dimensions $2\leq n\leq 4$ by using the energy estimates. Here we denote $(r)_+:= \max\{r,0\}$ as its positive part for any $r\in \R$. Afterwards, D'Abbicco and Ebert \cite{DabbiccoEbert2014} extended their global existence results to higher space dimensions by using $L^r-L^q$ estimates for solutions to the corresponding linear equation. On the other hand, the authors indicated in \cite{DabbiccoReissig} the non-existence of global (in time) solutions, just when $\sigma=1$, if the condition
$$p\leq p_0(n,1)= 1+ \frac{2}{n-1}$$
holds by using the standard test function method via the non-negativity of the fundamental solution (see also \cite{DuongKainaneReissig}). In these cited papers, one should recognizes that the assumptions
$$u_0=0 \quad \text{ and }\quad u_1\ge 0 $$
come to guarantee the non-negativity of the fundamental solution, which cannot be expected for any $\sigma\in(0,2]$. Quite recently, the global non-existence result for any $\sigma\in(0,2]$ has been completed by Dao and Reissig \cite{DaoReissig} when $p\leq p_0(n,\sigma)$ and for all $n\geq1$ by using a modified test function which deals with sign-changing data condition, namely
$$u_0=0 \quad \text{ and }\quad u_1 \in L^1 \text{ satisfying }\int_{\R^n}u_1(x)dx>0. $$
Again, we can see that assuming the first data $u_0=0$ is necessary to require. It seems that the previous used approaches do not work so well if $u_0$ is not identically zero. For the non-effective case $\sigma\in(1,2]$, the global existence results were also shown by \cite{DabbiccoReissig} only for $p>1+(1+\sigma)/(n-1)$ with $n\geq2$, while the blow-up of solutions has been obtained by \cite{DaoReissig} when $p\leq1+2/(n-1)$. Unfortunately, there appears a gap between the two exponents $1+(1+\sigma)/(n-1)$ and $1+2/(n-1)$. This is naturally due to the hyperbolic-like structure of the problem which seems not suitable with the standard test function method used to prove blow-up results.

Let us come back our interest to consider the structurally damped wave equation with the power nonlinearity of derivative type \eqref{equation1}. At present, there do not seem to be so many related manuscripts. D'Abbicco and Ebert \cite{DabbiccoEbert2017} proved the global (in time) existence of small data solutions for any
$$p>p_1(n,\sigma):= 1+\sigma/n $$
in the case of $\sigma\in(0,1)$ and lower space dimensions, as well as for any
$$p>p_1(n,1)= 1+1/n $$
in the case of $\sigma=1$ and all $n\geq1$. For the purpose of looking for the global (in time) existence of small data Sobolev solutions to \eqref{equation1}, with $\sigma\in(0,1)$, from suitable function spaces basing on $L^q$ spaces, with $q\in (1,\infty)$, we address the interested readers to the new papers of Dao and Reissig \cite{DaoReissig1}). When $\sigma\in(1,2]$, the only global existence results known up to our knowledge can be found in \cite{DaoReissig2} for any $p>\bar{p}$, where $\bar{p}$ are a suitable exponent, under small initial data in Sobolev space. From these observations, it still keeps an open problem so far to indicate a non-existence result for \eqref{equation1} in all cases $\sigma\in(0,2]$.

For this reason, our main motivation of this paper is to fill this lack. Especially, we would like to face up to dealing with the fractional Laplacian $(-\Delta)^{\sigma/2}$, the well-known nonlocal operators, where $\sigma$ is supposed to be a fractional number in $(0,2)$. As we can see, this case was not included in \cite{DabbiccoEbert2017} since the standard test function method seems difficult to be directly applied to these fractional Laplacian. To overcome this difficulty, the application of a new modified test function developed by Dao and Fino in the recent work \cite{DaoFino}, and mentioned in \cite{BonforteVazquez}, comes into play. Moreover, as analyzed above, we want to point out that it is challenging to follow the recent papers (\cite{DabbiccoReissig,DaoReissig}) in terms of the treatment of $u_0\neq 0$. Hence, the other point worthy of noticing in the present paper is that our method can be applicable effectively to relax the limitation of the assumption for $u_0=0$, which plays an important role in the proofs of blow-up results in severval previous literatures (see, for example, \cite{DabbiccoEbert2017,DuongKainaneReissig,DabbiccoReissig,DaoReissig}). \medskip

\textbf{Notations}
\begin{itemize}[leftmargin=*]
\item We denote the constant $\bar{\sigma}:=\min\{\sigma,1\}$, where $\sigma \in (0,2]$.
\item For later convenience, $C$ and $C_i$ with $i \in \Z$ stand for suitable positive constants.
\item For given nonnegative $f$ and $g$, we write $f\lesssim g$ if $f\le Cg$. We write $f \approx g$ if $g\lesssim f\lesssim g$.
\end{itemize}

Our main result reads as follows.
\begin{dl}[\textbf{Blow-up}] \label{Blow-up.Main}
Let $\sigma \in (0,2]$. We assume that $(u_0,u_1)\in \big(L^1(\mathbb{R}^n)\cap L^2(\mathbb{R}^n)\big)^2$ satisfying the following condition:
\begin{equation} \label{blow-up.1}
\int_{\R^n} u_1(x)dx > 0.
\end{equation}
If
\begin{equation} \label{blow-up.2}
p \in \Big(1, 1+ \frac{\bar{\sigma}}{n}\Big],
\end{equation}
then, there is no global (in time) weak solution to \eqref{equation1}.
\end{dl}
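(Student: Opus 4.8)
The plan is to prove non-existence by the test function (rescaled capacity) method, arguing by contradiction: assume a global weak solution $u$ exists and derive a contradiction with the sign condition \eqref{blow-up.1} in the range \eqref{blow-up.2}. I would first write down the weak formulation of \eqref{equation1}: for a suitable test function $\varphi$ with compact support (in space after truncation, and in time), integrating against $\varphi$ and moving all derivatives onto $\varphi$ gives
$$\int_0^\infty\!\!\int_{\R^n} |u_t|^p\,\varphi\,dx\,dt + \int_{\R^n} u_1(x)\varphi(0,x)\,dx = \int_0^\infty\!\!\int_{\R^n} u\big(\varphi_{tt}-\Delta\varphi\big)\,dx\,dt - \mu\int_0^\infty\!\!\int_{\R^n} u_t\,(-\Delta)^{\sigma/2}\varphi\,dx\,dt,$$
where I have integrated by parts in $t$ on the nonlinear and damping terms so that the initial data $u_1$ appears and the $u_0$ contribution is arranged to vanish or be controlled (this is where the new test function matters). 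The essential point is to choose $\varphi(t,x)=\psi(t)^\ell\,\phi_R(x)$ where $\phi_R$ is built from the modified test function of \cite{DaoFino} adapted to the nonlocal operator $(-\Delta)^{\sigma/2}$, so that $(-\Delta)^{\sigma/2}\phi_R$ is pointwise controlled by $R^{-\sigma}\phi_R$ (up to constants) on the relevant scale, and $\psi$ is a smooth cutoff in time.

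\medskip

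Next I would estimate the right-hand side terms by Young's / H\"older's inequality with the conjugate exponent $p'=p/(p-1)$, splitting off a factor of $\int|u_t|^p\varphi$ to be absorbed into the left-hand side. The term involving $u_t\,(-\Delta)^{\sigma/2}\varphi$ would be integrated by parts once more in time so that the genuinely nonlinear quantity $|u_t|^p$ can be produced via H\"older; alternatively one pairs $u_t$ directly against $(-\Delta)^{\sigma/2}\varphi_t$. The standard bound has the shape
$$\int_0^\infty\!\!\int_{\R^n}|u_t|^p\,\varphi\,dx\,dt + \int_{\R^n}u_1\,\phi_R\,dx \;\lesssim\; \Big(\int_{\Omega_R}|u_t|^p\varphi\Big)^{1/p}\Big(\int_{\Omega_R}\varphi^{-p'/p}\,|\partial\varphi|^{p'}\Big)^{1/p'},$$
where $\Omega_R$ is the support of the gradients of the cutoffs and $|\partial\varphi|$ collects $\varphi_{tt}$, $\Delta\varphi$ and $(-\Delta)^{\sigma/2}\varphi$-type factors. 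Carrying out the scaling $t\approx R^{2/\sigma}$ in time (dictated by matching $u_{tt}$ with the $\sigma$-order damping, so that the damping is the dominant operator, giving the parabolic-type scale $t\sim R^{\bar\sigma}$ more precisely) and $x\approx R$ in space, I compute the power of $R$ produced by the integral $\int\varphi^{-p'/p}|\partial\varphi|^{p'}$; the dominant contribution comes from the damping term and scales like $R^{(n+\bar\sigma)-\bar\sigma p'}$ or the analogous exponent. Absorbing the split-off piece and letting $R\to\infty$, in the strictly subcritical case $p<1+\bar\sigma/n$ the $R$-power is negative, forcing $\int_{\R^n}u_1\,dx\le 0$ in the limit, which contradicts \eqref{blow-up.1}.

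\medskip

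For the critical case $p=1+\bar\sigma/n$ the $R$-exponent is exactly zero, so the crude estimate only yields boundedness and not a contradiction; this is the step I expect to be the main obstacle. The remedy is the sharper argument: from the subcritical-style inequality one first deduces that $\int_0^\infty\!\!\int_{\R^n}|u_t|^p\,dx\,dt<\infty$, so that the nonlinear integral over the shrinking annular region $\Omega_R$ (where the cutoff derivatives are supported) tends to $0$ as $R\to\infty$ by absolute continuity of the integral. Re-running H\"older on $\Omega_R$ alone then gives a bound in which the right-hand side carries the factor $\big(\int_{\Omega_R}|u_t|^p\big)^{1/p}\to0$ while the $R$-power stays bounded, so the limit again forces $\int_{\R^n}u_1\,dx\le 0$, the desired contradiction. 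Throughout, the genuinely new technical difficulty — and the reason the method of \cite{DaoFino} is invoked — is controlling the nonlocal term $(-\Delta)^{\sigma/2}\varphi$: one must verify the pointwise/integral bound $\big|(-\Delta)^{\sigma/2}\phi_R(x)\big|\lesssim R^{-\sigma}\,\phi_R(x)^{1/p}\,(\cdots)$ uniformly, since the fractional Laplacian does not preserve compact support, and this is precisely what lets the blow-up argument proceed with $u_0\neq 0$ and without relying on non-negativity of the fundamental solution.
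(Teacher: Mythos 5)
Your overall strategy (rescaled test functions, H\"older absorption, a refined argument at criticality) is the paper's strategy, and your subcritical exponent count $R^{(n+\bar{\sigma})-\bar{\sigma}p'}$ with time scale $t\sim R^{\bar{\sigma}}$ is correct. But two steps in your sketch are genuinely broken. First, the weak formulation you display, with the bulk term $\int_0^\infty\int_{\R^n} u\,(\varphi_{tt}-\Delta\varphi)\,dx\,dt$, is incompatible with the nonlinearity $|u_t|^p$: H\"older can only produce the absorbable factor $\big(\int\int|u_t|^p\varphi\big)^{1/p}$ from bulk terms that pair $u_t$ against derivatives of $\varphi$, and there is no way to dominate $\int\int u\,\Delta\varphi$ (or $\int\int u\,\varphi_{tt}$) by the nonlinear functional. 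The paper avoids this by never integrating by parts twice in time: it keeps $-\int\int u_t\varphi_t$ (supported in the time annulus $[R^{\bar{\sigma}}/2,R^{\bar{\sigma}}]$), and for the term $\int\int u\,\eta_R\,\Delta\phi_R$ it writes $\eta_R=-\Psi_R'$ with the primitive $\Psi_R(t)=\int_t^\infty\eta_R(\tau)\,d\tau$ and integrates by parts once in time, yielding $-\int_{\R^n}u_0\,\Psi_R(0)\,\Delta\phi_R\,dx-\int\int u_t\,\Psi_R\,\Delta\phi_R$. The boundary term is controlled by $\Psi_R(0)|\Delta\phi_R|\lesssim R^{\bar{\sigma}-2}\phi_R$, hence tends to $0$ since $u_0\in L^1$; it is this primitive trick --- not the modified spatial cutoff of \cite{DaoFino}, which handles only the nonlocality of $(-\Delta)^{\sigma/2}$ --- that permits $u_0\neq 0$. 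Your proposal gestures at ``the $u_0$ contribution is arranged to vanish'' but supplies no mechanism, and your displayed identity forecloses the one that works.

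Second, your critical-case argument has a gap exactly where you yourself note the difficulty: since $(-\Delta)^{\sigma/2}\phi_R$ is nowhere compactly supported, the H\"older factor attached to the damping term is the \emph{global} quantity $I_R^{1/p}=\big(\int_0^{R^{\bar{\sigma}}}\int_{\R^n}|u_t|^p\varphi_R\big)^{1/p}$, which at $p=1+\bar{\sigma}/n$ converges to $\|u_t\|_{L^p((0,\infty)\times\R^n)}$ --- bounded, but not zero --- while its $R$-power is exactly $0$ when $\sigma\in(0,1]$. Absolute continuity on shrinking sets disposes of the time-cutoff term and of the $\Delta\phi_R$ term (supported in $|x|\ge RK$), but not of this one, so your limit yields only $\int u_1\,dx\lesssim 1$, not a contradiction. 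The paper closes this with a second, decoupled spatial parameter: $\phi_R(x)=\phi\big(R^{-1}K^{-1}x\big)$ with $K\ge1$, so that at criticality the damping bound carries the factor $K^{-\sigma+n/p'}=K^{-\sigma/p'}$; one lets $R\to\infty$ first (killing the annular terms) and then takes $K$ large to contradict \eqref{blow-up.1}. For $\sigma\in(1,2]$ one has $\bar{\sigma}=1$ and the damping term instead carries $R^{\bar{\sigma}-\sigma}=R^{1-\sigma}\to0$, so $K=1$ suffices. Without the $K$-parameter (or an equivalent device) your proof fails precisely in the critical case $\sigma\in(0,1]$, which is where the sharp exponent $p_c=1+\sigma/n$ is decided. (Minor further points: the sign of the damping term in your formulation is wrong, and the correct scaling bound from Lemmas \ref{lemma1}--\ref{lemma2} is $|(-\Delta)^{\sigma/2}\phi_R|\lesssim R^{-\sigma}\phi_R$, not $R^{-\sigma}\phi_R^{1/p}$.)
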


\bnx
\fontshape{n}
\selectfont
From the statement of Theorem \ref{Blow-up.Main}, we want to underline that the sign-changing data assumption of $u_0$ brings a new contribution in comparison with the previous studies \cite{DabbiccoEbert2017,DuongKainaneReissig,DabbiccoReissig,DaoReissig}. 
\enx

\bnx
\fontshape{n}
\selectfont
By Theorem \ref{Blow-up.Main} and Theorem $7$ in \cite{DabbiccoEbert2017}, it is clear that $p_c:=1+\sigma/n$ is the critical exponent of \eqref{equation1} when $\sigma \in (0,1]$. It is still an open problem whether $1+1/n$ is the critical exponent of \eqref{equation1} when $\sigma \in (1,2]$.
\enx

\section{Preliminaries} \label{Sec.Pre}
In this section, we collect some preliminary knowledge needed in our proofs.
\bdn[\cite{Kwanicki}]\label{Def.FracLaplace}
\fontshape{n}
\selectfont
Let $s \in (0,1)$. Let $X$ be a suitable set of functions defined on $\R^n$. Then, the fractional Laplacian $(-\Delta)^s$ in $\R^n$ is a non-local operator given by
$$ (-\Delta)^s: \,\,v \in X  \to (-\Delta)^s v(x):= C_{n,s}\,\, p.v.\int_{\R^n}\frac{v(x)- v(y)}{|x-y|^{n+2s}}dy $$
as long as the right-hand side exists, where $p.v.$ stands for Cauchy's principal value, $C_{n,s}:= \frac{4^s \Gamma(\frac{n}{2}+s)}{\pi^{\frac{n}{2}}\Gamma(-s)}$ is a normalization constant and $\Gamma$ denotes the Gamma function.
\edn

\bbd\label{lemma1}\cite[Lemma~2.3]{DaoFino}
Let $\langle x\rangle:=(1+(|x|-1)^4)^{1/4}$ for all $x\in\mathbb{R}^n$. Let $s \in (0,1]$ and $\phi:\mathbb{R}^n\rightarrow \mathbb{R}$ be a function defined by
\begin{equation}\label{testfunction}
\phi(x)=\left\{
\begin{array}{ll}
1&\hbox{if}\;\;|x|\leq1,\\
{}\\
\langle x\rangle^{-n-2s}&\hbox{if}\;\;|x|\geq1.
\end{array}
\right.
\end{equation}
Then, $\phi\in C^2(\mathbb{R}^n)$ and the following estimate holds:
\begin{equation}\label{10}
\left|(-\Delta)^s\phi(x)\right|\lesssim \phi(x) \quad\hbox{for all}\;x\in\mathbb{R}^n.
\end{equation}
\ebd

\bbd\label{lemma2}\cite[Lemma~2.4]{DaoReissig}
Let $s \in (0,1)$. Let $\psi$ be a smooth function satisfying $\partial_x^2\psi\in L^\infty(\mathbb{R}^n)$. For any $R>0$, let $\psi_R$ be a function defined by
$$ \psi_R(x):= \psi(x/R) \quad \text{ for all } x \in \R^n.$$
Then, $(-\Delta)^s (\psi_R)$ satisfies the following scaling properties:
$$(-\Delta)^s (\psi_R)(x)= R^{-2s}((-\Delta)^s\psi)(x/R) \quad \text{ for all } x \in \R^n. $$
\ebd
Using Lemmas \ref{lemma1} and \ref{lemma2}, or directly from \cite[Lemma~2.5]{DaoFino}, we may conclude easily the following result.
\bbd\label{lemma3}
Let $s \in (0,1]$, $R>0$ and $p>1$. Then, the following estimate holds
$$\int_{\mathbb{R}^n}(\phi_R(x))^{-\frac{1}{p-1}}\,\big|(-\Delta)^s\phi_R(x)\big|^{\frac{p}{p-1}}\, dx\lesssim R^{-\frac{2sp}{p-1}+n},$$
where $\phi_R(x):= \phi({x}/{R})$ and $\phi$ is given in \eqref{testfunction}.
\ebd


\section{Proof of the main result} \label{Sec.Proof}
Before starting our proof, we define weak solutions for (\ref{equation1}).
\begin{dn}\label{Def.weakSol}
 Let $T>0$, $p>1$, and $(u_0,u_1)\in L^2(\mathbb{R}^n)\times L^2(\mathbb{R}^n)$. A function $u$ is said to be a global weak solution to \eqref{equation1} if 
$$ u\in L^1_{loc}\big((0,\infty),L^2(\mathbb{R}^n)\big) \quad \text{ satisfying }\quad u_t\in L^p_{loc}\big((0,\infty),L^{2p}(\mathbb{R}^n)\big)\cap  L^1_{loc}\big((0,\infty),L^2(\mathbb{R}^n)\big), $$
and the following formulation holds
 	\begin{align*}
&\int_0^\infty\int_{\mathbb{R}^n}|u_t|^p\varphi(t,x)\,dx\,dt +\int_{\mathbb{R}^n}u_1(x)\varphi(0,x)\,dx \\
&\qquad =-\int_0^\infty\int_{\mathbb{R}^n}u_t(t,x)\varphi_{t}(t,x)\,dx\,dt- \int_0^\infty\int_{\mathbb{R}^n}u(t,x)\,\Delta\varphi(t,x)\,dx\,dt \\
&\qquad\quad + \mu\int_0^\infty\int_{\mathbb{R}^n}u_t(t,x)\,(-\Delta)^{\sigma/2}\varphi(t,x)\,dx\,dt
\end{align*}
for any test function $\varphi\in C\big([0,\infty);H^2(\mathbb{R}^n)\big) \cap C^1\big([0,\infty);L^2(\mathbb{R}^n)\big)$ such that its support in time is compact.
\end{dn}

\begin{proof}[Proof of Theorem \ref{Blow-up.Main}]
First, we introduce the function $\phi=\phi(x)$ as defined in \eqref{testfunction} with $s=\sigma/2$ and the function $\eta= \eta(t)$ having the following properties:
\begin{align}
&1.\quad \eta \in \mathcal{C}_0^\ity([0,\ity)) \text{ and }
\eta(t)=\begin{cases}
1 &\quad \text{ if }\quad 0 \le t \le \frac{1}{2}, \\
\text{decreasing } &\quad \text{ if }\quad \frac{1}{2} \le t \le 1, \\
0 &\quad \text{ if }\quad t \ge 1,
\end{cases} \nonumber \\
&2.\quad \eta^{-\frac{1}{p}}(t)\,|\eta'(t)| \le C \quad \text{ for any } t \in \Big[\frac{1}{2},1\Big]. \label{condition9.2.1}
\end{align}
For the existence of such function, see e.g. \cite[Chapter~1]{Yuta}. Let $R$ be a large parameter in $[0,\ity)$. We define the following test function:
$$ \varphi_R(t,x):= \eta_R(t) \phi_R(x), $$
where $\eta_R(t):= \eta\big(R^{-\bar{\sigma}}t\big)$ and $\phi_R(x):= \phi\big(R^{-1}K^{-1}x\big)$ for some $K\ge 1$ which will be fixed later. Moreover, we introduce the function
$$ \Psi_R(t)= \int_t^\ity \eta_R(\tau)d\tau \quad \text{for all}\,\,t\geq0. $$
Because of supp$\eta_R \subset \big[0,R^{\bar{\sigma}}\big]$, it follows supp$\Psi_R \subset \big[0,R^{\bar{\sigma}}\big]$. Here we also notice that the relation $\Psi'_R(t)= -\eta_R(t)$ holds. We define the functionals
$$ I_R:= \int_0^{\ity}\int_{\R^n}|u_t(t,x)|^p \varphi_R(t,x)\,dxdt= \int_0^{R^{\bar{\sigma}}}\int_{\R^n}|u_t(t,x)|^p \varphi_R(t,x)\,dxdt, $$
and
$$ I_{R,t}:= \int_{R^{\bar{\sigma}}/2}^{R^{\bar{\sigma}}}\int_{\R^n}|u_t(t,x)|^p \varphi_R(t,x)\,dxdt  \quad\text{ and }\quad I_{R,x}:= \int_0^{R^{\bar{\sigma}}}\int_{|x|\ge RK}|u_t(t,x)|^p \varphi_R(t,x)\,dxdt. $$
Let us assume that $u= u(t,x)$ is a global weak solution to (\ref{equation1}), then
\begin{align*}
I_R +\int_{\mathbb{R}^n}u_1(x)\phi_R(x)\,dx &=-\int_{R^{\bar{\sigma}}/2}^{R^{\bar{\sigma}}}\int_{\mathbb{R}^n}u_t(t,x)\eta'_R(t)\phi_R(x)\,dx\,dt \\
&\qquad + \int_0^{R^{\bar{\sigma}}}\int_{|x|\ge RK}u(t,x)\,\Psi'_R(t)\Delta\phi_R(x)\,dx\,dt \\
&\qquad + \mu\int_0^{R^{\bar{\sigma}}}\int_{\mathbb{R}^n}u_t(t,x)\,\eta_R(t)(-\Delta)^{\sigma/2}\phi_R(x)\,dx\,dt.
\end{align*}
Using integrating by parts, we conclude that 
\begin{align}
&I_R +\int_{\mathbb{R}^n}u_1(x)\phi_R(x)\,dx+\int_{\mathbb{R}^n}u_0(x)\Psi_R(0)\Delta\phi_R(x)\,dx \nonumber \\
&\quad =-\int_{R^{\bar{\sigma}}/2}^{R^{\bar{\sigma}}}\int_{\mathbb{R}^n}u_t(t,x)\eta'_R(t)\phi_R(x)\,dx\,dt-\int_0^{R^{\bar{\sigma}}}\int_{|x|\ge RK}u_t(t,x)\,\Psi_R(t)\Delta\phi_R(x)\,dx\,dt \nonumber \\
&\qquad + \mu\int_0^{R^{\bar{\sigma}}}\int_{\mathbb{R}^n}u_t(t,x)\,\eta_R(t)(-\Delta)^{\sigma/2}\phi_R(x)\,dx\,dt\nonumber \\
&\quad =: - J_1- J_2+ J_3. \label{t9.2.1}
\end{align}
Applying H\"{o}lder's inequality with $\frac{1}{p}+\frac{1}{p'}=1$ we may estimate $J_1$ as follows:
\begin{align*}
|J_1| &\le \int_{\frac{R^{\bar{\sigma}}}{2}}^{R^{\bar{\sigma}}}\int_{\R^n} |u_t(t,x)|\, \big|\eta'_R(t)\big| \phi_R(x) \, dx\,dt \\
&\lesssim \Big(\int_{{R^{\bar{\sigma}}}/{2}}^{R^{\bar{\sigma}}}\int_{\R^n} \Big|u_t(t,x)\varphi^{\frac{1}{p}}_R(t,x)\Big|^p \,dx\,dt\Big)^{\frac{1}{p}} \Big(\int_{{R^{\bar{\sigma}}}/{2}}^{R^{\bar{\sigma}}}\int_{\R^n} \Big|\varphi^{-\frac{1}{p}}_R(t,x) \eta'_R(t) \phi_R(x)\Big|^{p'}\, dx\,dt\Big)^{\frac{1}{p'}} \\
&\lesssim I_{R,t}^{\frac{1}{p}}\, \Big(\int_{{R^{\bar{\sigma}}}/{2}}^{R^{\bar{\sigma}}}\int_{\R^n} \eta_R^{-\frac{p'}{p}}(t) \big|\eta'	_R(t)\big|^{p'} \phi_R(x)\, dx\,dt\Big)^{\frac{1}{p'}}.
\end{align*}
By the change of variables $\tilde{t}:= R^{-\bar{\sigma}}t$ and $\tilde{x}:= R^{-1}K^{-1}x$, a straight-forward calculation gives
\begin{equation}
|J_1| \lesssim I_{R,t}^{\frac{1}{p}}\, R^{-\bar{\sigma}+ \frac{n+\bar{\sigma}}{p'}}K^{\frac{n}{p'}}\Big(\int_{\R^n} \big< \tilde{x}\big>^{-n-\sigma}\, d\tilde{x}\Big)^{\frac{1}{p'}}\lesssim I_{R,t}^{\frac{1}{p}}\, R^{-\bar{\sigma}+ \frac{n+\bar{\sigma}}{p'}}K^{\frac{n}{p'}}. \label{t9.2.2}
\end{equation}
Here we used $\eta'_R(t)= R^{-\bar{\sigma}}\eta'(\tilde{t})$ and the assumption (\ref{condition9.2.1}). Now let us turn to estimate $J_2$ and $J_3$. Applying H\"{o}lder's inequality again as we estimated $J_1$ leads to
$$|J_2|\le I_{R,x}^{\frac{1}{p}}\, \Big(\int_0^{R^{\bar{\sigma}}}\int_{|x|\ge RK} \Psi^{p'}_R(t)\eta^{-\frac{p'}{p}}_R(t) \phi^{-\frac{p'}{p}}_R(x)\, \big|\Delta \phi_R(x)\big|^{p'} \, dx\,dt\Big)^{\frac{1}{p'}}, $$
and
$$|J_3|\le I_R^{\frac{1}{p}}\, \Big(\int_0^{R^{\bar{\sigma}}}\int_{\R^n} \eta_R(t) \phi^{-\frac{p'}{p}}_R(x)\, \big|(-\Delta)^{\sigma/2}\phi_R(x)\big|^{p'} \, dx\,dt\Big)^{\frac{1}{p'}}. $$
In order to control $J_2$, we derive the following estimate:
\begin{align*}
\eta^{-\frac{p'}{p}}_R(t)\Psi^{p'}_R(t) = \eta^{-\frac{p'}{p}}_R(t)\Big( \int_t^\ity \eta_R(\tau)d\tau \Big)^{p'} &= \eta^{-\frac{p'}{p}}_R(t)\Big( \int_t^{R^{\bar{\sigma}}} \eta_R(\tau)d\tau \Big)^{p'} \\ 
&\le \eta^{-\frac{p'}{p}}_R(t) \eta_R(t)(R^{\bar{\sigma}}- t)^{p'}\le R^{\bar{\sigma} p'} \eta_R(t)\le R^{\bar{\sigma} p'},
\end{align*}
where we have used the fact that $\eta_R$ is a non-increasing function satisfying $\eta_R\le1$. Then, carrying out the change of variables $\tilde{t}:= R^{-\bar{\sigma}}t$, $\tilde{x}:= R^{-1}K^{-1}x$ and Lemma \ref{lemma3} with $s=1$ we arrive at
\begin{equation}
|J_2| \lesssim I_{R,x}^{\frac{1}{p}}\, R^{-2+\bar{\sigma}+ \frac{n+\bar{\sigma}}{p'}}K^{-2+ \frac{n}{p'}}. \label{t9.2.3}
\end{equation}
Next carrying out again the change of variables $\tilde{t}:= R^{-\bar{\sigma}}t$ and $\tilde{x}:= R^{-1}K^{-1}x$ and employing Lemma \ref{lemma2}, then Lemma \ref{lemma3}, with $s=\sigma/2$, we can proceed $J_3$ as follows:
\begin{equation}
|J_3| \lesssim I_R^{\frac{1}{p}}\, R^{-\sigma+ \frac{n+\bar{\sigma}}{p'}}K^{-\sigma+ \frac{n}{p'}}. \label{t9.2.4}
\end{equation}
Combining the estimates from (\ref{t9.2.1}) to (\ref{t9.2.4}) we may arrive at
\begin{align*}
&I_R+\int_{\R^n} u_1(x) \phi_R(x)\, dx \\
&\qquad \le C_0\Big(I_{R,t}^{\frac{1}{p}}R^{-\bar{\sigma}+ \frac{n+\bar{\sigma}}{p'}}K^{\frac{n}{p'}}+ I_{R,x}^{\frac{1}{p}}R^{-2+\bar{\sigma}+ \frac{n+\bar{\sigma}}{p'}}K^{-2+ \frac{n}{p'}}+ I_R^{\frac{1}{p}}R^{-\sigma+ \frac{n+\bar{\sigma}}{p'}}K^{-\sigma+ \frac{n}{p'}}\Big) \\
&\qquad \qquad+ \int_{\mathbb{R}^n}|u_0(x)|\Psi_R(0)|\Delta\phi_R(x)|\,dx.
\end{align*}
Moreover, it is clear that $\Psi_R(0)\leq R^{\bar{\sigma}}$. By the change of variables, using Lemma \ref{lemma1} we can easily check that $|\Delta\phi_R(x)|\leq R^{-2}\phi_R(x)$. Therefore, this implies that
\begin{align}
&I_R+\int_{\R^n} u_1(x) \phi_R(x)\, dx \nonumber \\
&\qquad \le C_0\Big(I_{R,t}^{\frac{1}{p}}R^{-\bar{\sigma}+ \frac{n+\bar{\sigma}}{p'}}K^{\frac{n}{p'}}+ I_{R,x}^{\frac{1}{p}}R^{-2+\bar{\sigma}+ \frac{n+\bar{\sigma}}{p'}}K^{-2+ \frac{n}{p'}}+ I_R^{\frac{1}{p}}R^{-\sigma+ \frac{n+\bar{\sigma}}{p'}}K^{-\sigma+ \frac{n}{p'}}\Big) \nonumber \\
&\qquad \qquad +R^{\bar{\sigma}-2}\int_{\mathbb{R}^n}|u_0(x)|\phi_R(x)\,dx. \label{t9.2.5}
\end{align}
Because of the assumption (\ref{blow-up.1}), there exists a sufficiently large constant $R_1> 0$ such that it holds
\begin{equation}
\int_{\R^n} u_1(x) \phi_R(x)\, dx >0 \label{t9.2.6}
\end{equation}
for all $R > R_1$. Since $u_0 \in L^1$, it implies immediately that
$$ R^{\bar{\sigma}-2}\int_{\mathbb{R}^n}|u_0(x)|\phi_R(x)\,dx \to 0 \quad \text{ as } R\to \ity. $$
Hence, from (\ref{t9.2.6}) there exists there exists a sufficiently large constant $R_2> 0$ such that
$$R^{\bar{\sigma}-2}\int_{\mathbb{R}^n}|u_0(x)|\phi_R(x)\,dx< \frac{1}{2}\int_{\R^n} u_1(x) \phi_R(x)\, dx $$
for all $R > R_2$. Now we choose $R_0:= \max\{R_1,\,R_2\}$. Then, from (\ref{t9.2.5}) we have
\begin{align}
&I_R+ \frac{1}{2}\int_{\R^n} u_1(x) \phi_R(x)\, dx \nonumber \\
&\qquad \le C_0\Big(I_{R,t}^{\frac{1}{p}}R^{-\bar{\sigma}+ \frac{n+\bar{\sigma}}{p'}}K^{\frac{n}{p'}}+ I_{R,x}^{\frac{1}{p}}R^{-2+\bar{\sigma}+ \frac{n+\bar{\sigma}}{p'}}K^{-2+ \frac{n}{p'}}+ I_R^{\frac{1}{p}}R^{-\sigma+ \frac{n+\bar{\sigma}}{p'}}K^{-\sigma+ \frac{n}{p'}}\Big) \label{*} 
\end{align}
for all $R > R_0$. By choosing $K=1$ and noticing the relations $I_{R,t}\le I_R$ and $I_{R,x}\le I_R$ we may arrive, particularly, at
\begin{equation}
I_R+ \frac{1}{2}\int_{\R^n} u_1(x) \phi_R(x)\, dx \le C_0\, I_R^{\frac{1}{p}}\, R^{-\bar{\sigma}+ \frac{n+\bar{\sigma}}{p'}} \label{t9.2.7}
\end{equation}
for all $R > R_0$. Thanks to the following $\varepsilon$-Young's inequality:
$$ab\leq \varepsilon a^p+C(\varepsilon)b^{p'}\quad \text{ for all }a,b>0 \text{ and for any }\varepsilon>0, $$ 
we conclude
$$ C_0\, I_R^{\frac{1}{p}}\, R^{-\bar{\sigma}+ \frac{n+\bar{\sigma}}{p'}}\le \frac{1}{2}I_R+ C_1\,R^{-\bar{\sigma}p'+ n+\bar{\sigma}}.$$
Consequently, from (\ref{t9.2.7}) we derive
$$ \frac{1}{2}I_R+ \frac{1}{2}\int_{\R^n} u_1(x) \phi_R(x)\, dx\le C_1\, R^{-\bar{\sigma}p'+ n+\bar{\sigma}}, $$
which follows that
\begin{align}
I_R &\le 2C_1\, R^{-\bar{\sigma}p'+ n+\bar{\sigma}}, \label{t9.2.8} \\
\int_{\R^n} u_1(x) \phi_R(x)\,dx &\le 2C_1\, R^{-\bar{\sigma}p'+ n+\bar{\sigma}}, \label{t9.2.9}
\end{align}
for all $R > R_0$. It is clear that the assumption (\ref{blow-up.2}) is equivalent to $-\bar{\sigma} p'+ n+\bar{\sigma}\le 0$. For this reason, let us now separate our considerations into two cases as follows.\\

\noindent \textbf{Case 1}: $-\bar{\sigma} p'+ n+\bar{\sigma}< 0$, i.e. the subcritical case. Letting $R \to \ity$ in (\ref{t9.2.9}) 
we infer a contradiction to (\ref{blow-up.1}).\\

\noindent \textbf{Case 2}: $-\bar{\sigma} p'+ n+\bar{\sigma}= 0$, i.e. the critical case. Then, we can see from (\ref{t9.2.8}) that $I_R \le 2C_1$ for all $R > R_0$. Using Beppo Levi's theorem on monotone convergence, on the one hand, we derive
$$\int_0^\infty\int_{\mathbb{R}^n}|u_t(t,x)|^p\,dx\,dt=\lim_{R\rightarrow\infty}\int_0^{R^{\bar{\sigma}}}\int_{\mathbb{R}^n}|u_t(t,x)|^p\,\varphi_R(t,x)\,dx\,dt= \lim_{R\rightarrow\infty}I_R \leq 2C_1, $$
that is, $u_t \in L^p((0,\infty)\times\mathbb{R}^n)$. By the absolute continuity of the Lebesgue integral, it follows that $I_{R,t} \to 0$ and $I_{R,x} \to 0$ as $R\to \ity$. On the other hand, using again the fact that $p=1+ \frac{\bar{\sigma}}{n}$ we obtain from (\ref{*}) the following estimate:
\begin{equation}
I_R+ \frac{1}{2}\int_{\R^n} u_1(x) \phi_R(x)\, dx \le C_0\Big(I_{R,t}^{\frac{1}{p}}K^{\frac{n}{p'}}+ I_{R,x}^{\frac{1}{p}}R^{-2+2\bar{\sigma}}K^{-2+ \frac{n}{p'}}+ I_R^{\frac{1}{p}}R^{-\sigma+ \bar{\sigma}}K^{-\sigma+ \frac{n}{p'}}\Big) \label{t9.2.10}
\end{equation}
 for all $K\geq1$ and all $R > R_0$. \\
$\bullet$ If $\sigma\in(0,1]$, then $\bar{\sigma}=\sigma$. As a consequence, from (\ref{t9.2.10}) we have
\begin{equation}
I_R+ \frac{1}{2}\int_{\R^n} u_1(x) \phi_R(x)\, dx \le C_0\Big(I_{R,t}^{\frac{1}{p}}K^{\frac{n}{p'}}+ I_{R,x}^{\frac{1}{p}}R^{-2(1-\sigma)}K^{-2+ \frac{n}{p'}}+ I_R^{\frac{1}{p}}K^{-\sigma+ \frac{n}{p'}}\Big) \label{t9.2.11}
\end{equation}
for all $K\geq1$ and all $R > R_0$. Letting $R \to \ity$ in (\ref{t9.2.11}) we get
\begin{equation}
\int_{\R^n} u_1(x)dx \lesssim K^{-\sigma+ \frac{n}{p'}} \quad\text{for all}\,\,K\geq1.\label{t9.2.12}
\end{equation}
Due to $p=1+ \frac{\bar{\sigma}}{n}=1+ \frac{\sigma}{n}$, it is clear that $-\sigma+ \frac{n}{p'}=-\frac{\sigma}{p'}<0$. Therefore, we can fix a sufficiently large constant $K\ge 1$ in (\ref{t9.2.12}) to gain a contradiction to (\ref{blow-up.1}). \\
$\bullet$ If $\sigma\in(1,2]$, then $\bar{\sigma}=1$. As a result, choosing $K=1$ we may conclude from (\ref{t9.2.10}) that
\begin{equation}
\int_{\R^n} u_1(x) \phi_R(x)\, dx\le 2C_0\Big(I_{R,t}^{\frac{1}{p}}+ I_{R,x}^{\frac{1}{p}}+ I_R^{\frac{1}{p}}R^{1-\sigma}\Big) \label{**}
\end{equation}
for all $R > R_0$. Since $\sigma>1$, letting $R \to \ity$ in (\ref{**}) we obtain a contradiction to (\ref{blow-up.1}) again. \\

\noindent Summarizing, the proof Theorem \ref{Blow-up.Main} is completed.
\end{proof}


\end{document}